\newtheorem{theorem}{Theorem}[section]
\newtheorem{proposition}[theorem]{Proposition}
\newtheorem{problem}[theorem]{Problem}
\newtheorem*{theorem*}{Theorem}
\newtheorem{definition}[theorem]{Definition}
\numberwithin{equation}{section}
\newcommand{\dee}{\mathrm{d}}
\newcommand{\deee}{\hspace{2 pt} \mathrm{d}}
\begin{document}

\title{Weak equivalence of stationary actions and the entropy realization problem}

\author{Peter Burton, Martino Lupini and Omer Tamuz}

\date{\today}

\maketitle

\begin{abstract} We introduce the notion of weak containment for stationary actions of a countable group and define a natural topology on the space of weak equivalence classes. We prove that Furstenberg entropy is an invariant of weak equivalence, and moreover that it descends to a continuous function on the space of weak equivalence classes. \end{abstract}

\section{Introduction}

Let $G$ be a countable discrete group and let $m$ be a probability measure on $G$. Let also $(X,\mu)$ be a standard probability space. A measurable action of $G$ on $(X,\mu)$ is said to be $m$-stationary if the corresponding convolution of $m$ with $\mu$ is equal to $\mu$. More explicitly, this means $\sum_{g \in G} m(g) \cdot \mu(gA) = \mu(A)$ for all measurable subsets $A$ of $X$. Stationary actions are automatically nonsingular, and form a natural intermediate class between measure-preserving actions and general nonsingular actions. We will write $\mathrm{Stat}(G,m,X,\mu)$ for the set of $m$-stationary actions of $G$ on $(X,\mu)$. Given an action $a \in \mathrm{Stat}(G,m,X,\mu)$ we will write $g^a$ for the nonsingular transformation of $(X,\mu)$ corresponding to $g$. \\
\\
In \cite{K}, Kechris defined a notion of weak containment for measure-preserving actions of countable groups analogous to the standard notion of weak containment for unitary representations. The same definition can be given for stationary actions.

\begin{definition} Let $a,b \in \mathrm{Stat}(G,m,X,\mu)$. We say that $a$ is weakly contained in $b$, in symbols $a \preceq b$, if the following condition holds. For every $\epsilon > 0$, every finite $F \subseteq G$ and every finite collection $A_1,\ldots,A_n$ of measurable subsets of $X$, there are measurable subsets $B_1,\ldots,B_n$ of $X$ such that \[ |\mu(g^a A_i \cap A_j) - \mu(g^b B_i \cap B_j)| < \epsilon \] for all $g \in F$ and all $i,j \in \{1,\ldots,n\}$. We say that $a$ is weakly equivalent to $b$, in symbols $a \sim b$, if $a \preceq b$ and $b \preceq a$. \end{definition}

Thus $a$ is weakly contained in $b$ if the statistics of $a$ on finite partitions can be simulated arbitrary well in the action $b$. Weak equivalence is a much coarser relation than isomorphism; for example in \cite{FoWe04} it is shown that all free measure-preserving actions of an amenable group are weakly equivalent. It is also better behaved from the perspective of descriptive set theory: there is in general no standard Borel structure on the set of isomorphism classes of $m$-stationary actions, whereas in Section \ref{sec2} we will define a natural Polish topology on the set of weak equivalence classes of $m$-stationary actions for any pair $(G,m)$.\\
\\
In \cite{Fur63}, Furstenberg introduced an invariant $h_m(X,\mu,a)$ which quantifies how far an $m$-stationary action $a$ is from being measure-preserving. Later termed Furstenberg entropy, this is defined by \[ h_m(X,\mu,a) = - \sum_{g \in G} m(g) \cdot \int_X \log  \frac{\dee g^a \mu}{\dee \mu}(x)  \deee \mu(x).\]

By Jensen's inequality, we have that $h_m(X,\mu,a)$ is nonnegative, and it is zero if and only if $a$ is measure-preserving. The following problem has been studied in articles such as \cite{Bow14f}, \cite{BowHaTa16}, \cite{Dani16}, \cite{HarTa15}, \cite{KaiVer83} and \cite{Nev03}.

\begin{problem}[Furstenberg entropy realization problem] For a fixed pair $(G,m)$, describe the possible values of Furstenberg entropy on ergodic $\nu$-stationary systems. \label{prob1} \end{problem}

The goal of this note is to establish the following theorem, which shows that the above problem can be regarded as a problem about the structure of the space of weak equivalence classes.

\begin{theorem} \label{thm1} Furstenberg entropy is an invariant of weak equivalence and descends to a continuous function on the space of weak equivalence classes. \end{theorem}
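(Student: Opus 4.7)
My plan is to express Furstenberg entropy as a supremum of quantities already controlled by the weak containment relation, namely, sums over finite partitions. For a finite measurable partition $\mathcal{P} = \{A_1, \ldots, A_n\}$ of $X$ and $g \in G$, set
\[
H_\mathcal{P}(g,a) = -\sum_{i=1}^n \mu(A_i)\, \log \frac{\mu(g^a A_i)}{\mu(A_i)}.
\]
Applying Jensen's inequality to the normalized measure $\mu|_{A_i}/\mu(A_i)$ and the concave function $\log$ gives the pointwise bound $H_\mathcal{P}(g,a) \leq -\int_X \log \frac{\dee g^a\mu}{\dee\mu}\,\dee\mu$, and for any refining sequence $(\mathcal{P}_k)$ of finite partitions generating the Borel $\sigma$-algebra of $X$, L\'evy's martingale theorem promotes this to $H_{\mathcal{P}_k}(g,a) \nearrow -\int_X \log \frac{\dee g^a\mu}{\dee\mu}\,\dee\mu$ for each $g$. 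Monotone convergence over the counting measure $m$ then yields the key identity
\[
h_m(X,\mu,a) = \sup_\mathcal{P} \sum_{g \in G} m(g)\, H_\mathcal{P}(g,a).
\]

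\textbf{Invariance.} For $a \preceq b$ and $\epsilon > 0$, I first choose a partition $\mathcal{P}$ and a finite $F \subseteq G$ with $\sum_{g \in F} m(g) H_\mathcal{P}(g,a) \geq h_m(a) - \epsilon$, and apply the definition of weak containment to $\mathcal{P}$, $F \cup \{e\}$, and a small accuracy $\delta > 0$ to get sets $B_1, \ldots, B_n$ in $b$ with $|\mu(g^b B_i \cap B_j) - \mu(g^a A_i \cap A_j)| < \delta$ for all relevant $g, i, j$. Taking $g = e$ shows that $\{B_i\}$ is an approximate partition, which a standard disjointification converts into a genuine partition $\mathcal{B}$ with controlled error. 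Continuity of $(p,q) \mapsto -p\log(q/p)$ on compact subsets of $(0,1]^2$ then ensures $|H_\mathcal{B}(g,b) - H_\mathcal{P}(g,a)| < \epsilon/|F|$ for all $g \in F$, provided $\delta$ was chosen small enough. Summing yields $h_m(b) \geq h_m(a) - 2\epsilon$, and symmetry gives the reverse inequality.

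\textbf{Continuity.} For each fixed $\mathcal{P}$ and finite $F \subseteq G$, the truncated sum $\sum_{g \in F} m(g) H_\mathcal{P}(g, \cdot)$ depends continuously on the finitely many partition observables $\mu(g^a A_i \cap A_j)$ and therefore descends to a continuous function on the Polish space of weak equivalence classes defined in Section~\ref{sec2} (whose topology will be chosen precisely to make such observables continuous). Consequently $h_m$ is lower semi-continuous as a supremum of continuous functions, and the matching upper bound is obtained by running the weak-containment approximation in the reverse direction along a convergent sequence $[b_n] \to [a]$: near-optimal partitions for $b_n$ are realized approximately in $a$, giving $\limsup h_m(b_n) \leq h_m(a)$.

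\textbf{Main obstacle.} The delicate technical point throughout is that $(p,q) \mapsto -p\log(q/p)$ is only continuous where $q > 0$, so a small approximation error in a small value of $\mu(g^a A_i)$ can amplify dramatically in the logarithm. My plan is to overcome this by restricting to ``regular'' partitions where $\mu(g^a A_i) \geq \eta > 0$ uniformly in $g \in F$ and $i$, and verifying that such partitions still saturate the variational formula in the finite-entropy case; the infinite-entropy case is handled separately by a truncation argument showing that finite partition sums tending to $+\infty$ on one side transfer to $+\infty$ on the other.
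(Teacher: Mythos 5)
Your approach is genuinely different from the paper's. The paper fixes $g$ and proves that the Radon--Nikodym derivatives $\frac{\dee g^{a_n}\mu}{\dee\mu}$ converge in distribution to $\frac{\dee g^{a}\mu}{\dee\mu}$ whenever $\tilde{a}_n \to \tilde{a}$, via a rearrangement argument with terminal segments of the order induced by the derivative, leaning on the uniform bound $\frac{\dee g^{a}\mu}{\dee\mu} \leq 1/m(g)$. You instead invoke the classical partition (Gelfand--Yaglom--Perez) characterization of the relative entropies $D(\mu \,\|\, g^a\mu)$, expressing $h_m$ as a supremum of functionals of exactly the finite statistics $\mu(g^a A_i \cap A_j)$ that weak containment controls. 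Your key identity is correct: each $H_{\mathcal{P}}(g,a)$ is a finite KL divergence, hence nonnegative and monotone under refinement, so the supremum commutes with the sum over $g$ by passing to common refinements, and the Jensen/martingale argument is standard. The invariance argument is sound and arguably cleaner than the paper's route: for a fixed near-optimal pair $(F,\mathcal{P})$ the numbers $\mu(g^a A_i)$ are finitely many fixed positive quantities, so $\delta$ may be chosen afterwards to stay where $(p,q)\mapsto -p\log(q/p)$ is continuous, and the disjointification error is controlled by $\frac{\dee g^b\mu}{\dee\mu}\leq 1/m(g)$. This also gives lower semicontinuity, with one presentational caveat: $H_{\mathcal{P}}(g,\cdot)$ for a fixed partition does not descend to the quotient; what descends is the supremum of the (only lower semicontinuous, because of the logarithmic blow-up) functional over the compact sets $C_{m,n}$, and one must check that this supremum is lower semicontinuous in the Hausdorff metric and unchanged by taking closures --- both true for lsc functionals.

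The genuine gap is upper semicontinuity, precisely at the obstacle you flag, and your proposed fix does not close it. To show $\limsup_n h_m(b_n) \leq h_m(a)$ you must transfer near-optimal partitions $\mathcal{P}_n$ for $b_n$ back to $a$; but the sizes $k_n$, the finite sets $F_n$, and crucially the regularity thresholds $\eta_n = \min_{g,i}\mu(g^{b_n}B_i)$ all depend on $n$. Convergence in $\tilde{\delta}$ gives $d_H(C_{m,k}(b_n),C_{m,k}(a))\to 0$ only for each fixed $(m,k)$, not uniformly as $(m_n,k_n)$ grows; and even for fixed $(m,k)$, an approximation within $d_H$ can destroy the value of $\sum_i p_i\log(p_i/q_i)$ when some $q_i$ is smaller than $d_H$. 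Your claim that $\eta$-regular partitions still saturate the variational formula is true for each fixed action, with $\eta$ depending on that action and partition; what the argument needs is a single $\eta$ (and a single finite $F$ and bound on $k$) working for all $b_n$ simultaneously up to a prescribed error --- in effect a uniform integrability statement for $-\log\frac{\dee g^{b_n}\mu}{\dee\mu}$ along the sequence. Nothing in the proposal supplies this, and it is exactly the point where the paper's level-set argument does its real work. So as written you have proved invariance under weak equivalence (and lower semicontinuity), but not continuity.
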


\section{A characterization of weak containment}

In this section we verify that one obtains an equivalent notion if one alters the definition of weak containment to allow shifts on both sides of the intersections.

\begin{proposition}

Let $a,b \in \mathrm{Stat}(G,m,X,\mu)$. Then the following are equivalent.

\begin{enumerate}
\item[(i)] $a$ is weakly contained $b$.

\item[(ii)] For any finite subset $F$ of $G$, $\epsilon >0$, and measurable subsets $A_1,\ldots
,A_n$ of $X$, there exist measurable subsets $B_1,\ldots,B_n$ of $X$ such that \begin{equation} \label{eq7} \left\vert \mu(g^a A_i \cap h
^a A_j)-\mu(g^b B_i \cap h^b B_j) \right\vert <\epsilon \end{equation} for all $g,h\in F$ and $
i,j\in \left\{ 1,\ldots ,n\right\} $. \end{enumerate}

\end{proposition}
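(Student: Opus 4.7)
The implication $(\mathrm{ii}) \Rightarrow (\mathrm{i})$ is immediate by setting $h = e$ in $(\mathrm{ii})$: since $e^a A_j = A_j$ and $e^b B_j = B_j$, the inequality becomes exactly the one in $(\mathrm{i})$. The content is the reverse implication, and my plan is to apply $(\mathrm{i})$ to a refined test family built from atoms of the relevant shifted algebra.

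Given $F \subseteq G$ finite (WLOG containing $e$), $\epsilon > 0$, and measurable $A_1, \ldots, A_n$, let $\alpha_1, \ldots, \alpha_M$ be the atoms of the finite Boolean algebra generated by $\{g^a A_i : g \in F \cup \{e\},\ i \leq n\}$. Applying $(\mathrm{i})$ to $\{\alpha_s\}$ with shift set $F$ and a small tolerance $\delta$ produces sets $\beta_s \subseteq X$ with $|\mu(g^a \alpha_s \cap \alpha_t) - \mu(g^b \beta_s \cap \beta_t)| < \delta$ for all $g \in F$ and $s,t$. Specializing to $g = e$ forces $\mu(\beta_s \cap \beta_t) < \delta$ for $s \neq t$ together with $|\mu(\beta_s) - \mu(\alpha_s)| < \delta$, so I replace the $\beta_s$ by the disjoint family $\beta_s^* := \beta_s \setminus \bigcup_{t < s} \beta_t$ (with $\mu(\beta_s \triangle \beta_s^*) = O(M\delta)$) and set $B_i := \bigsqcup_{s : \alpha_s \subseteq A_i} \beta_s^*$.

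The key payoff of this atomic cleanup is that the $\beta_s^*$ are disjoint and each $g^b$ is a nonsingular bijection (hence preserves disjointness), so both sides of the target inequality decompose as matching double sums:
\[ \mu(g^a A_i \cap h^a A_j) = \sum_{s \in I_i,\, t \in I_j} \mu(g^a \alpha_s \cap h^a \alpha_t), \qquad \mu(g^b B_i \cap h^b B_j) = \sum_{s \in I_i,\, t \in I_j} \mu(g^b \beta_s^* \cap h^b \beta_t^*), \]
where $I_j := \{s : \alpha_s \subseteq A_j\}$. It therefore suffices to verify the entrywise atom-level estimate $|\mu(g^a \alpha_s \cap h^a \alpha_t) - \mu(g^b \beta_s^* \cap h^b \beta_t^*)| < \epsilon/M^2$ for all $s, t$ and $g, h \in F$.

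I expect this final entrywise two-sided bound to be the main obstacle, since $(\mathrm{i})$ directly provides only one-sided atomic intersections. The plan is to handle it by a further application of $(\mathrm{i})$ to the enlarged family $\{h^a \alpha_s : h \in F \cup \{e\},\ s \leq M\}$ with shift set $\{e\}$, yielding approximating sets that match the two-sided atomic pairings; a small additional argument identifies them with the $h^b \beta_s^*$ up to symmetric-difference error. The residual technical point is that the cleanup $\beta_s \mapsto \beta_s^*$ introduces an error of measure $O(M \delta)$, which after applying $g^b$ becomes $\int_{\beta_s \triangle \beta_s^*} \tfrac{\dee g^b \mu}{\dee \mu}\, \dee \mu$; this is controlled by absolute continuity of the integral since $\tfrac{\dee g^b \mu}{\dee \mu} \in L^1(\mu)$, uniformly over the finitely many $g \in F$. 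Hence $\delta$ may be chosen small enough (depending on $\epsilon$, $M$, $|F|$, and the action $b$) to yield the desired bound.
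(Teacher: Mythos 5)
Your reduction of $(\mathrm{ii})\Rightarrow(\mathrm{i})$ to $h=e$, the passage to atoms, the disjointification, and the decomposition of both sides into double sums over atoms are all fine. The gap is exactly where you predicted it would be, and the proposed repair does not close it. Your first application of $(\mathrm{i})$ (to $\{\alpha_s\}$ with shift set $F$) yields only the one-sided data $\mu(g^b\beta_s\cap\beta_t)\approx\mu(g^a\alpha_s\cap\alpha_t)$, which says nothing about $\mu(g^b\beta_s^*\cap h^b\beta_t^*)$. Your second application of $(\mathrm{i})$, to the enlarged family $\{h^a\alpha_s\}$ with shift set $\{e\}$, produces sets $\gamma_{h,s}$ satisfying $\mu(\gamma_{g,s}\cap\gamma_{h,t})\approx\mu(g^a\alpha_s\cap h^a\alpha_t)$ --- but since the only shift used is $e^b=\mathrm{id}$, these inequalities contain no information whatsoever about how $b$ acts on the $\gamma$'s, and the $\gamma$'s arise from an application of the definition that is independent of the one producing the $\beta$'s. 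Weak containment gives no coherence between different approximating families, so the ``small additional argument'' identifying $\gamma_{h,s}$ with $h^b\beta_s^*$ up to symmetric difference is not small: it is the entire content of the proposition, and nothing in your setup supplies it. (A toy obstruction: the $\gamma$'s could sit in a completely different part of $X$ than the $\beta$'s.)

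The fix is the one the paper uses: make a \emph{single} application of the definition to the enlarged family $\{g_j^a A_i\}_{i,j}$ (your $\{h^a\alpha_s\}$) with the \emph{full} shift set $F$, obtaining sets $B_{i,j}$ with $|\mu(g_m^a A_{i,j}\cap A_{l,k})-\mu(g_m^b B_{i,j}\cap B_{l,k})|<\delta$ for all indices. The cross terms with $g=g_m$, $(i,j)=(l,k)$ shifted appropriately then give $\mu(g_m^b B_{j,0}\cap B_{j,m})\approx\mu(g_m^a A_{j,0}\cap A_{j,m})=\mu(A_{j,m})$, and combining this with control on the individual measures $\mu(B_{j,m})$ and $\mu(g_m^b B_{j,0})$ (the paper gets this by adjoining $X$ itself as one of the test sets; your partition structure would also serve) yields $\mu(B_{j,m}\mathbin{\triangle} g_m^b B_{j,0})\le 6\delta$. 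That symmetric-difference bound is precisely the coherence statement your argument needs --- the approximant of the shifted set equals the shift of the approximant, up to small error --- and once you have it, substituting $g_m^bB_{j,0}$ for $B_{j,m}$ in the one-sided estimate finishes the proof with $B_i=B_{i,0}$. Your atom/disjointification machinery and the absolute-continuity control of $\int_{\beta_s\triangle\beta_s^*}\frac{\dee g^b\mu}{\dee\mu}\deee\mu$ are then unnecessary overhead, though not incorrect.
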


\begin{proof}
Taking $h = 1_G$ it is clear that ($\mathrm{ii}$) implies ($\mathrm{i}$). We now show ($\mathrm{i}$) implies ($\mathrm{ii}$). Suppose
that $F= \{ g_0,\ldots ,g_m \} $ is a finite subset of $G$, $
n$ is a natural number, and $A_0,\ldots ,A_n$ are measurable subsets of $X$. Without loss
of generality, we can assume that $n=m$, $g_0 = 1_G$ and $A_0 =X$. Fix $\epsilon >0$ and choose $0< \delta
< \epsilon /7$. Set $A_{i,j}=g^a_j A_{i}$ for $i,j  \in \{1,\ldots,n\}$. In particular we
have $A_{i,0}=A_i$ and $A_{i,j}=g^a_j A_{i,0}$ for $i,j \in \{1,\ldots,n\}$. By assumption
there exist measurable subsets $B_{i,j}$ of $X$ such that \[ | \mu (
A_{i,j}\cap g^a_m A_{l,k} ) -\mu ( B_{i,j} \cap g^b_m B_{l,k}) | <\delta \] for all $i,j,k,l,m \in \{1,\ldots,n\}$. Since $
A_{0,0}=X$ and $g_0 = 1_G$, we have that $\mu(B_{0,0}) >1-\delta $. It follows that \[ |\mu(g^a_m A_{l,k})-\mu(g_m^b B_{l,k})| < 2\delta \]  for $m,l,k \in \{1,\ldots,n\} $. Therefore

\begin{eqnarray*}
\mu(B_{j,m} \triangle g_m^b B_{j,0}) &=& \mu(B_{j,m}) + \mu(g_m^b B_{j,0})-2 \mu(B_{j,m} \cap g^b_m B_{j,0}) \\
&\leq & 6\delta +\mu(A_{j,m})+\mu_(g_m^a A_{j,0})-2 \mu (A_{j,m} \cap g_m^a A_{j,0}) \\
&=&6\delta +\mu(A_{j,m} \triangle g_m^a A_{j,0})=6\delta. \end{eqnarray*}%

In conclusion%
\begin{equation*} |\mu (g_k^a A_i \cap g_m^a A_j)-\mu(g_k^b B_{i,0} \cap g_m^b B_{j,0}) | 
\leq |\mu(g_k^a A_{i,0}\cap A_{j,m})-\mu(g_k^b B_{i,0} \cap B_{j,m})| +6\delta 
\leq 7\delta < \epsilon.
\end{equation*}%
for every $i,j,k,m \in \{1,\ldots,n\}$. Thus we can take $B_i = B_{i,0}$ to obtain (\ref{eq7}). \end{proof}

\section{The space of weak equivalence classes} \label{sec2}

For $a \in \mathrm{Stat}(G,m,X,\mu)$ we will write $\tilde{a}$ for the weak equivalence class of $a$. Let $(g_k)_{k=1}^\infty$ be an enumeration of $G$. For a natural number $m$ and an ordered finite partition $\overline{A} = \{A_1,\ldots,A_n\}$ of $X$, we will write $M_{m,\overline{A}}(a)$ for the point in $[0,1]^{m \times n \times n}$ whose $(k,i,j)$-coordinate is $\mu(g_k^a A_i \cap A_j)$. Let then $C_{m,n}(a)$ be the closure in $[0,1]^{m \times n \times n}$ of the set \[ \Bigl\{ M_{m,\overline{A}}: \overline{A} \mbox{ is a partition of }X\mbox{ into }n \mbox{ pieces.} \Bigr \}. \] Clearly we have $a \preceq b$ if and only if $C_{m,n}(a) \subseteq C_{m,n}(b)$ for all natural numbers $m,n$. Let \[ \delta(a,b) = \sum_{m,n = 1}^\infty \frac{1}{2^{m+n}} \cdot d_H \bigl ( C_{m,n}(a),C_{m,n}(b) \bigr), \] where $d_H$ is the Hausdorff distance on the space of compact subsets of $[0,1]^{m \times n \times n}$. Then for any $a,b,c,d \in \mathrm{Stat}(G,m,X,\mu)$ with $a \sim c$ and $b \sim d$ we have $\delta(a,b) = \delta(c,d)$. Thus the quantity $\tilde{\delta}(\tilde{a},\tilde{b}) \colonequals \delta(a,b)$ is a well-defined metric on the space of weak equivalence classes. The corresponding topology is easily seen to be Polish. We denote this space by $\widetilde{\mathrm{Stat}}(G,m,X,\mu)$. As in the measure-preserving case, an ultraproduct construction shows that $\widetilde{\mathrm{Stat}}(G,m,X,\mu)$ is compact.\\
\\
In addition to its topology, $\widetilde{\mathrm{Stat}}(G,m,X,\mu)$ carries a convex structure. Given $a,b \in \mathrm{Stat}(G,m,X,\mu)$, and $t \in (0,1)$ one can realize $a$ as an action on $[0,t)$ and realize $b$ as an action on $[t,1]$. One then defines $ta + (1-t)b$ to be the action on $[0,1]$ which agrees with $a$ on $[0,t)$ and $b$ on $[t,1]$. It is easy to see that this procedure gives a well-defined operation on $\widetilde{\mathrm{Stat}}(G,m,X,\mu)$. As in the measure-preserving case discussed in \cite{PBur1}, the convex structure is better behaved if one instead considers the relation $\preceq_s$ of stable weak containment. This is defined by letting $a \preceq_s b$ if and only if $a \preceq b \times \iota$, where $\iota$ is the trivial action of $G$ on a standard probability space. Write $\widetilde{\mathrm{Stat}}_s(G,m,X,\mu)$ for the space of stable weak equivalence classes. $\tilde{\delta}$ gives a Polish topology on $\widetilde{\mathrm{Stat}}_s(G,m,X,\mu)$ and since $h_m(X,\mu,a \times \iota) = h_m(X,\mu,a)$, Theorem \ref{thm1} continues to hold if we replace weak equivalence by stable weak equivalence. The arguments from \cite{PBur1} carry over to show that $\widetilde{\mathrm{Stat}}_s(G,m,X,\mu)$ is isomorphic to a compact convex subset of a Banach space, and that its extreme points are exactly those stable weak equivalence classes containing an ergodic action. Moreover, the map $a \mapsto h_m(X,\mu,a)$ respects the convex combination operation. Thus understanding the convex structure of $\widetilde{\mathrm{Stat}}_s(G,m,X,\mu)$ could give new understanding of Problem \ref{prob1}.

\section{Proof of Theorem \ref{thm1}}

For each $n$, let $a_n \in \mathrm{Stat}(G,m,X,\mu)$; let also $a \in \mathrm{Stat}(G,m,X,\mu)$. Assume that $\tilde{a}_n$ converges to $\tilde{a}$ in $\widetilde{\mathrm{Stat}}(G,m,X,\mu)$. Fixing $g \in G$, it is enough to show the following: for any $c \geq 0$ we have \[ \lim_{n \to \infty} \mu \left( \left\{x \in X: \frac{\dee g^{a_n} \mu}{\dee \mu}(x) > c \right \} \right) = \mu \left( \left\{x \in X: \frac{\dee g^a \mu}{\dee \mu}(x) > c \right \} \right). \]

Let $M$ be a positive constant such that $\frac{\dee g^b \mu}{\dee \mu} \leq M$ for any $m$-stationary action $a$. Let $\omega_n =\frac{\dee g^{a}\mu}{\dee \mu }$ and $\omega_n= \frac{\dee g^{a_n}\mu}{\dee \mu}$. Write $C= \{ x \in X:\omega(x) >c \} $, and $C_n= \left\{ x\in X:\omega _n\left( x\right) >c\right\} $. We will prove that $\mu \left( C\right) \leq \liminf_n\mu
\left( C_n\right) $. The proof that $\mu \left( C\right) \geq
\limsup_n\mu \left( C_n\right) $ is analogous. Suppose by contradiction $%
\mu \left( C\right) >\liminf_n\mu \left( C_n\right) $. Thus, after
passing to a subsequence, we can assume that there is $\delta >0$ such that $%
\mu \left( C_n\right) \leq \mu \left( C\right) -\delta $ for every $n\in 
\mathbb{N}$. Identify $X$ with $[0,1]$, so that we have a Borel linear order on $X$. Define the Borel linear order $\sqsubseteq $ on $X$ by letting $%
t\sqsubseteq s$ iff $\omega \left( t\right) <\omega \left( s\right) $ or $\omega
\left( t\right) =\omega \left( s\right) $ and $t<s$. Similarly define $\sqsubseteq
_n$ in terms of $\omega _n$. Note that if $D$ is a terminal segment of $%
\sqsubseteq $ then we have $\mu (g^{a}D)\geq \mu (g^{a}E)$ for any $E$ with $\mu
(E)=\mu (D)$. For $n\in \mathbb{N}$ write $D_n$ for the terminal segment
of $\sqsubseteq $ such that $\mu (D_n)=\mu _n(C_n)$ and write $E_n$ for
the terminal segment of $\sqsubseteq _n$ such that $\mu (C)=\mu _n(E_n)$.
Let also $F_n$ be the terminal segment of $\sqsubseteq $ such that $\mu
(F_n)=\mu (C_n)+\delta $ and let $K_n$ be the terminal segment of $%
\sqsubseteq _n$ such that $\mu _n(K_n)=\mu (C_n)+\delta $. Clearly $%
D_n\subseteq F_n\subseteq C$ and $C_n\subseteq K_n\subseteq E_n$.
We have 
\begin{equation}
\mu (F_n\setminus D_n)=\mu (F_n)-\mu (D_n)=\delta =\mu
_n(K_n)-\mu _n(C_n)=\mu _n(K_n\setminus C_n)  \label{eq14}
\end{equation}%
and similarly 
\begin{equation}
\mu (C\setminus F_n)=\mu _n(E_n\setminus K_n).  \label{eq1.1.1}
\end{equation}%
Note that since $\omega (x)>c\geq \omega _n(y)$ if $x\in C$ but $y\in
X\setminus C_n$, (\ref{eq1.1.1}) implies 
\begin{equation}
\mu \left( g^{a}(C\setminus F_n)\right) \geq \mu _n\left(
g^{a_n}(E_n\setminus K_n)\right) .  \label{eq15}
\end{equation}%
Let $H$ be the terminal segment of $\sqsubseteq $ such that $\mu (H)=\mu
(C)-\delta $ so that by (\ref{eq14}) we have $\delta =\mu (C\setminus H)=\mu
(F_n\setminus D_n)$. Since $F_n\setminus D_n\subseteq C$ and $%
C\setminus H$ has the lowest Radon-Nikodym derivative of any subset of $C$
with measure $\delta $ this implies 
\begin{equation}
\mu \left( g^{a}(C\setminus H)\right) \leq \mu \left( g^{a}(F_n\setminus
D_n)\right) .  \label{eq16}
\end{equation}%
For $n\in \mathbb{n}$ from (\ref{eq14}), (\ref{eq15}) and (\ref{eq16}) we
have 
\begin{align}
& \mu \left( g^{a}(C\setminus D_n)\right) -\mu \left(
g^{a_n}(E_n\setminus C_n)\right)  \notag \\
& =\mu \left( g^{a}(C\setminus F_n)\right) +\mu \left(
g^{a}(F_n\setminus D_n)\right) -\mu \left( g^{a_n}(E_n\setminus
K_n)\right) -\mu \left( g^{a_n}(K_n\setminus C_n)\right)  \notag \\
& \geq \mu \left( g^{a}(F_n\setminus D_n)\right) -\mu \left(
g^{a_n}(K_n\setminus C_n)\right) \geq \mu \left( g^{a}(F_n\setminus
D_n)\right) -c\cdot \mu \left( K_n\setminus C_n\right)  \notag \\
& =\mu \left( g^{a}(F_n\setminus D_n)\right) -c\delta \geq \mu \left(
g^{a}(C\setminus H)\right) -c\delta .  \label{eq17}
\end{align}%
For $x\in C$ we have $\omega (x)>c$ so the last quantity is strictly
positive. Choose 
\begin{equation}
0<\varepsilon <\frac{1}{2(4+M)}\cdot \left( \mu \left( g^{a}(C\setminus
H)\right) -c\delta \right) \text{.}  \label{eq10}
\end{equation}%
Since $\tilde{a}_n \to \tilde{a}$, for every Borel partition $A_{1},\ldots ,A_{k}$ of $X$
there is a partition $B_{1},\ldots ,B_{k}$ of $X$ such that $|\mu
(A_{i})-\mu (B_{i})|{}<\varepsilon $ and $\left\vert \mu (g^{a}A_{i}\cap
A_{j})-\mu (g^{a_n}B_{i}\cap B_{j})\right\vert <\varepsilon $ for all $%
i,j\in \{1,\ldots ,k\}$. Fixing $n$, write $C^{\prime }=C_n$, $D=D_n$, $%
E=E_n$ and $\sqsubseteq^{\prime }=\sqsubseteq_n$. Note that from (\ref{eq17}) and (\ref%
{eq10}) we have 
\begin{equation}
2(4+M)\varepsilon <\mu \left( g^{a}(C\setminus D)\right) -\mu \left(
g^{a_n}(E\setminus C^{\prime })\right) .  \label{eq12}
\end{equation}%
Let $A_{1}=X\setminus C$ and $A_{2}=C$. Find $B_{1},B_{2}\subseteq X$ such
that $|\mu (A_{i})-\mu (B_{i})|<\varepsilon $ and 
\[
\left\vert \mu (g^{a}A_{i}\cap A_{j})-\mu (g^{a_n}B_{i}\cap
B_{j})\right\vert <\varepsilon
\]
for each $i,j\in \{1,2\}.$ Note that 
\[
\mu \left( X\setminus (B_{1}\cup B_{2})\right) \leq 2\varepsilon .
\]
We have 
\begin{align}
\mu (g^{a}A_{1})& =\mu (g^{a}A_{1}\cap A_{1})+\mu (g^{a}A_{1}\cap A_{2}) 
\notag \\
& \geq \mu (g^{a_n}B_{1}\cap B_{1})+\mu (g^{a_n}B_{1}\cap
B_{2})-2\varepsilon  \notag \\
& \geq \mu (g^{a_n}B_{1}\cap B_{1})+\mu (g^{a_n}B_{1}\cap B_{2})+\mu
\left( g^{a_n}B_{1}\setminus (B_{1}\cup B_{2})\right) -4\varepsilon  \notag
\\
& \geq \mu (g^{a_n}B_{1})-4\varepsilon .  \label{eq1}
\end{align}%
Note that 
\[
\mu (B_{1})\geq \mu (A_{1})-\varepsilon =\mu (X\setminus C)-\varepsilon .
\]
Write $L$ for the initial segment of $\sqsubseteq^{\prime }$ such that $\mu (L)=\mu
(X\setminus C)-\varepsilon $. Note that $\mu (X\setminus E)=\mu (X\setminus
C)$ and so $\mu \left( X\setminus (E\cup L)\right) =\varepsilon $. We have 
\[
\mu \left( g^{a_n}(X\setminus E)\right) =\mu (g^{a_n}L)+\mu \left(
g^{a_n}\left( X\setminus (E\cup L)\right) \right)
\]
and therefore%
\begin{equation}
\mu (g^{a_n}L)\geq \mu \left( g^{a_n}(X\setminus E)\right) -M\varepsilon
.  \label{eq1.1.2}
\end{equation}%
Since $\mu (B_{1})\geq \mu (L)$ and $\mu (g^{a_n}L)\leq \mu (g^{a_n}J)$
for any $J\subseteq X$ with $\mu (J)\geq \mu (L)$ from (\ref{eq1.1.2}) we
see 
\[
\mu (g^{a_n}B_{1})\geq \mu \left( g^{a_n}(X\setminus E)\right)
-M\varepsilon .
\]
From (\ref{eq1}) we have 
\begin{equation}
\mu \left( g^{a}(X\setminus C)\right) \geq \mu \left( g^{a_n}(X\setminus
E)\right) -(4+M)\varepsilon .  \label{eq2}
\end{equation}%
Now write $A_{1}=C^{\prime }$ and $A_{2}=X\setminus C^{\prime }$. Find $%
B_{1},B_{2}\subseteq X$ such that 
\[
\left\vert \mu (A_{i}\cap A_{j})-\mu (B_{i}\cap B_{j})\right\vert
<\varepsilon
\]
and 
\[
\left\vert \mu (g^{a_n}A_{i}\cap A_{j})-\mu (g^{a}B_{i}\cap
B_{j})\right\vert <\varepsilon
\]
for each $i,j\in \{1,2\}$. Arguing as before we have $\mu
(g^{a_n}A_{1})\leq \mu (g^{a}B_{1})+4\varepsilon $ and $\mu
(g^{a}B_{1})\leq \mu (g^{a}D)+M\varepsilon $ so that 
\begin{equation}
\mu (g^{a_n}C^{\prime })\leq \mu (g^{a}D)+(4+M)\varepsilon .  \label{eq4}
\end{equation}%
From (\ref{eq2}) and (\ref{eq4}) we have%
\begin{equation}
\mu \left( g^{a}(X\setminus C)\cup D)\right) \geq \mu \left( g^{a_n}\left(
(X\setminus E)\cup C^{\prime }\right) \right) -2(4+M)\varepsilon .
\label{eq5}
\end{equation}%
Note that 
\[
D\sqcup (C\setminus D)\sqcup (X\setminus C)=X
\]
and 
\[
C^{\prime }\sqcup (E\setminus C^{\prime })\sqcup (X\setminus E)=X.
\]
Thus from (\ref{eq12}) and (\ref{eq5}) we have 
\begin{align*}
1& =\mu \left( g^{a}\left( D\cup (X\setminus C)\right) \right) +\mu \left(
g^{a}(C\setminus D)\right) \\
& \geq \mu \left( g^{a_n}\left( C^{\prime }\cup (X\setminus E)\right)
\right) -2(4+M)\varepsilon +\mu \left( g^{a}(C\setminus D)\right) \\
& >\mu \left( g^{a_n}\left( C^{\prime }\cup (X\setminus E)\right) \right)
+\mu \left( g^{a_n}(E\setminus C^{\prime })\right) =1
\end{align*}%
which is the desired contradiction. This concludes the proof of Theorem \ref{thm1}.

\bibliographystyle{plain}
\bibliography{bibliography2016.10.31}

Department of Mathematics\\
California Institute of Technology\\
Pasadena CA, 91125\\
\texttt{pjburton@caltech.edu}\\
\texttt{lupini@caltech.edu}\\
\texttt{omertamuz@gmail.edu}

\end{document}